\DeclareSymbolFont{cyrletters}{OT2}{wncyr}{m}{n}
\DeclareMathSymbol{\Sha}{\mathalpha}{cyrletters}{"58}
\newcounter{braid}
\newcounter{strands}
\def\cross{%
  \@ifnextchar^{\message{Got sup}\cross@sup}{\cross@sub}}
\def\cross@sup^#1_#2{\render@cross{#2}{#1}}
\def\cross@sub_#1{\@ifnextchar^{\cross@@sub{#1}}{\render@cross{#1}{1}}}
\def\cross@@sub#1^#2{\render@cross{#1}{#2}}
\def\render@cross#1#2{
  \def\strand{#1}
  \def\crossing{#2}
  \pgfmathsetmacro{\cross@y}{-\value{braid}*\braid@h}
  \pgfmathtruncatemacro{\nextstrand}{#1+1}
  \foreach \thread in {1,...,\value{strands}}
  {
    \pgfmathsetmacro{\strand@x}{\thread * \braid@w}
    \ifnum\thread=\strand
    \pgfmathsetmacro{\over@x}{\strand * \braid@w + .5*(1 - \crossing) * \braid@w}
    \pgfmathsetmacro{\under@x}{\strand * \braid@w + .5*(1 + \crossing) * \braid@w}
    \draw[braid] \pgfkeysvalueof{/tikz/braid start} +(\under@x pt,\cross@y pt) to[out=-90,in=90] +(\over@x pt,\cross@y pt -\braid@h);
    \draw[braid] \pgfkeysvalueof{/tikz/braid start} +(\over@x pt,\cross@y pt) to[out=-90,in=90] +(\under@x pt,\cross@y pt -\braid@h);
    \else
    \ifnum\thread=\nextstrand
    \else
     \draw[braid] \pgfkeysvalueof{/tikz/braid start} ++(\strand@x pt,\cross@y pt) -- ++(0,-\braid@h);
    \fi
   \fi
  }
  \stepcounter{braid}
}
\tikzset{braid/.style={double=\pgfkeysvalueof{/tikz/braid colour},double distance=1pt,line width=2pt,white}}
\newcommand{\braid}[2][]{%
  \begingroup
  \pgfkeys{/tikz/strands=2}
  \tikzset{#1}
  \pgfkeysgetvalue{/tikz/braid width}{\braid@w}
  \pgfkeysgetvalue{/tikz/braid height}{\braid@h}
  \setcounter{braid}{0}
  \let\sigma=\cross
  #2
  \endgroup
}
\newtheorem{theorem}{Theorem}
\newtheorem{lemma}[theorem]{Lemma}
\def\Z{\mathbb{Z}}
\def\C{\mathbb{C}}
\def\C{\mathbb{C}}
\def\N{\mathbb{N}}
\def\F{\mathbb{F}}
\def\qed{\hfill$\square$\medskip}
\def\Zpk{\mathbb{Z}/p^{k}}
\def\Zpk1{\mathbb{Z}/p^{k-1}}
\newcommand{\rref}[1]{(\ref{#1})}
\newcommand{\beg}[2]{\begin{equation}\label{#1}#2\end{equation}}
\def\r{\rightarrow}
\def\F{\mathbb{F}}
\def\sl2{\widetilde{SL_{2}(\Z)}}
\title[Equivariant filtration on $THH$ of polynomial 
algebras]{On the equivariant motivic filtration of the topological Hochschild 
homology of polynomial algebras}
\author{Po Hu, Igor Kriz and Petr Somberg}
\thanks{The authors acknowledge support by grants GA\,CR P201/12/G028 and GA\,CR 19-28628X.
Kriz also acknowledges the support from the Simons Foundation, most recently through Awards 403297 and 958219.}
\begin{document}
\maketitle

\vspace{5mm}

{\em Dedicated to the fond memory of Jan Nekov\'{a}\v{r} with thanks for the many cheerful and 
enlightening conversations we had in Prague in recent years.}

\vspace{5mm}

\begin{abstract}
We identify the equivariant structure of the filtered pieces
of the motivic filtration defined by
Bhatt, Morrow and Scholze on the topological Hochschild cohomology spectrum
of polynomial algebras over $\F_p$.
\end{abstract}

\

\noindent

\section{Introduction}\label{s1}
This note stems from the authors' discussions with Peter Scholze in the period of 2015-2017.
In connection with questions on the structures defined in the paper \cite{bsm},
Bhatt, Morrow and Scholze
\cite{bsm2} defined an $S^1$-equivariant decreasing filtration $F^iTHH(R)$ on the topological Hochschild
homology of a smooth $\F_p$-algebra $R$, which they called the {\em motivic filtration}.  In this note,
by $THH(R)$ we mean the genuine $RO(S^1)$-graded equivariant spectrum. (The existence
of such a canonical structure was proved
by B\"{o}kstedt, Hsiang and Madsen \cite{bhm}.)
The purpose of this note is to compute the full equivariant structure of the filtered pieces $F^iTHH(R)$,
which strengthens the results of \cite{bsm2} on these spectra.

The coefficients (homotopy groups) of the $\Z/p^{r-1}$-fixed
points of $THH(R)$ can be expressed in terms of 
$$TR^r(\F_p)=\Z/p^r[\sigma_r].$$
The answer \cite{hesselholt, hesselholt1} is
$$TR^r(\F_p)\otimes_{\Z/p^r} W_r \Omega R$$ 
where $W_r\Omega$ denotes the de Rham-Witt complex of length $r$. The motivic filtration
is a decreasing multiplicative filtration in which $\sigma_r$ has filtration degree $1$, and on $ W_r\Omega$, the filtration
coincides with the de Rham filtration. It is shown in \cite{bsm}
that the associated graded spectrum of the motivic
filtration, (and therefore its fixed point spectra $F^iTR^r(R)$), are equivariant (resp. non-equivariant)
Eilenberg-MacLane spectra. 

\vspace{3mm}
However, computation of the equivariant structure of $F^i TR^r(R)$ remains an interesting open problem.
While for cyclotomic spectra, the equivariant structure can be deduced from non-equivariant information (\cite{nik}),
$F^i TR^r(R)$ are, in fact, {\em not} cyclotomic spectra (see \cite{bsm2}).
The purpose of
this note is to calculate one example, namely the equivariant structure on $F^iTHH(R)$ in the case where $R$ is a
polynomial algebra over $\F_p$ on finitely many generators.

\vspace{3mm}

The method \cite{bsm2} of definining the motivic filtration is by proving {\em semiperfect descent}.
For $R$ smooth over $\F_p$, if we 
denote by $R_{perf}$ the perfectization of $R$, then the canonical homomorphism
$R\r R_{perf}$, equalizing the two canonical homomorphisms $R_{perf}
\rightrightarrows R_{perf}\otimes_R R_{perf}$, 
induces an equivalence of $S^1$-equivariant
spectra
\beg{en1}{THH(R)\r |THH(R_{perf}\otimes_R\dots\otimes_R R_{perf})|_\bullet
} 
where 
$|?|_\bullet$ means cosimplicial realization. Now the coefficients
of the $\Z/m$-fixed points
of the constituent spaces on right hand side of 
\rref{en1} (where $m$ is a natural number)
are concentrated in even dimensions, since the rings $R_{perf}\otimes_R\dots\otimes_R R_{perf}$
are quasiregular semiperfect $\F_p$-algebras (meaning that the Frobenius is onto, the first Andre-Quillen homology is
a flat module and the higher Andre-Quillen homology is $0$). 
The desired filtration is obtained by applying cosimplicial realization to the equivariant Postnikov
filtration of those spectra.

\vspace{3mm}
To prove that \rref{en1} is an equivalence on $\Z/m$-fixed points, Bhatt, Morrow and 
Scholze \cite{bsm2} do not use a direct calculation. They used the fact that since $THH$ is a cyclotomic 
spectrum, it suffices to prove that \rref{en1} induces an isomorphism on homotopy co-fixed points (i.e. the corresponding
Borel homology). Taking the left derived functor of the equivariant Postnikov filtration of $THH(R)$ in the
category of simplicial $\F_p$-algebras $R$ (i.e. the right Kan extension of the functor
given by the equivariant Postnikov filtration on $THH(?)$ 
from the source category 
of simplicial smooth $\F_p$-algebras to the category
of $\Z/m$-equivariant $S$-modules), and smashing it with $E\Z/m_+$, produces associated graded pieces which
are smash-products of pieces of (the spectral realization of) the Quillen cotangent complex $L$ with trivial $\Z/m$-action,
smashed with $E\Z/m_+$. For these pieces, the analogue of the descent equivalence \rref{en1} follows
from Bhargav Bhatt's theorem on faithfully flat descent for $L$ (\cite{bhatt,scholze}).

\vspace{3mm}
The equivalence \rref{en1} gives rise to a spectral sequence obtained by the cosimplicial filtration
on the right hand side.
A direct calculation of this spectral sequence is an interesting exercise worth going through for 
its own sake. For a general $R$ say, a smooth $\F_p$-algebra, it is still somewhat out of
range at this point. We do it for $R=\F_p[x_1,\dots,x_n]$, which, as we will note, can be used to
obtain a proof of semi-perfect descent which is different from the argument of \cite{bsm2}. More immediately
from the point of interest of this note, for a polynomial algebra, the spectral sequence \rref{en1} collapses to
$E_2$, which can be used to calculate the equivariant structure on $F^iTHH(R)$. This is the main purpose of
our note.

\vspace{3mm}
As a warm-up,
let us discuss the case of $n=1$ first. 
To simplify notation, let us abbreviate the notation $THH(R)$ to $T(R)$. Using the
method of Hesselholt and Madsen \cite{hmpoly}, one sees that, $S^1$-equivariantly,
\beg{e1}{T(\F_p[x])=S^0\vee\bigvee_{n\in \N} S^1_{n+}\wedge T(\F_p)
}
where $S^1_n$ is $S^1$ with the $S^1$-action by $n$-th power, $\N=\{1,2,\dots\}$. 
Note that we have a cofibration sequence
(hence, stably, equivalently a fibration sequence)
$$S^1_{n+}\r S^0\r \widetilde{S^1_n}$$
where $\widetilde{?}$ denotes unreduced suspension. Note that $\widetilde{S^1_n}$ is also the $1$-point
compactification of the irreducible representation $\C[\xi_n]$ on which an element $z\in S^1$ acts by multiplication
by $z^n$. Let us denote the Bhatt-Morrow-Scholze 
motivic filtration by $F^i$, and let $?_{\geq i}$ denote the equivariant Postnikov
filtration.

\begin{theorem}\label{t1}
Let $p$ be a prime, and let $m\in \N$. For $R=\F_p[x]$,
the spectral sequence associated with applying $\Z/m$-fixed points to the target of \rref{en1} collapses
to $E_2$, and converges to its abutment 
$T(R)^{\Z/m}$. Additionally, $F^iT(\F_p[x])$ is (via the canonical map) equivalent to the 
wedge of $T(\F_p)_{\geq i}$ with the wedge over $n\in \N$ of homotopy fibers of the maps
$$T(\F_p)_{\geq i}\r (\widetilde{S^1_n}\wedge T(\F_p))_{\geq i}.$$
\end{theorem}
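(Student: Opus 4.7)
The plan is to analyze the spectral sequence of \rref{en1} using the weight grading on $R=\F_p[x]$ (with $x$ in weight $1$), which extends to $R_{perf}=\F_p[x^{1/p^\infty}]$ with weights in $\Z[1/p]_{\geq 0}$. Because the tensor products are formed over $R$, each $R_{perf}^{\otimes_R(k+1)}$ inherits a single $\Z[1/p]_{\geq 0}$-grading, and this is preserved by $THH$. Consequently, the cosimplicial object in \rref{en1}, its $\Z/m$-fixed points, the induced equivariant Postnikov filtration, the cosimplicial realization, and the associated spectral sequence all split into summands indexed by weights.

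First I would compute the weight components of $THH(R_{perf}^{\otimes_R(k+1)})$ using Hesselholt-Madsen-type techniques. The iterated tensor products can be presented explicitly via the nilpotent difference generators $\epsilon_{i,j}=x_i^{1/p^j}-x_{i+1}^{1/p^j}$ (nilpotent in characteristic $p$). With this description in hand, the plan is to establish: (i) the weight-$0$ piece of the cosimplicial object is constant with value $T(\F_p)$; (ii) for each weight $w\in\Z[1/p]_{>0}\smin\N$, the full weight-$w$ cosimplicial object is cohomologically trivial; and (iii) for weight $w=n\in\N$, the cosimplicial object is equivalent, up to an acyclic piece, to a two-step cosimplicial object with $0$-th term $T(\F_p)$ and $1$-st term $\widetilde{S^1_n}\wedge T(\F_p)$, whose cosimplicial coboundary realizes the connecting map in the cofiber sequence $S^1_{n+}\wedge T(\F_p)\r T(\F_p)\r\widetilde{S^1_n}\wedge T(\F_p)$. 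After passing to $\Z/m$-fixed points, the $E_2$-page is then concentrated in a single row per weight, producing immediate collapse, and the abutment matches the $\Z/m$-fixed points of \rref{e1}, proving the first assertion.

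For the identification of $F^iT(\F_p[x])$, I would leverage the two-step presentation in (iii). Applying the equivariant Postnikov section $(-)_{\geq i}$ term-by-term to this two-step cosimplicial object, and then taking cosimplicial realization (which for a two-step cosimplicial spectrum is exactly the homotopy fiber of its coboundary), yields the homotopy fiber of $T(\F_p)_{\geq i}\r(\widetilde{S^1_n}\wedge T(\F_p))_{\geq i}$ as the weight-$n$ contribution to $F^iT(\F_p[x])$. The weight-$0$ piece contributes $T(\F_p)_{\geq i}$ directly. Summing over weights and matching the canonical maps from each summand to the \rref{e1} decomposition then gives the asserted wedge equivalence.

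The main obstacle is establishing the two-step presentation of the weight-$n$ cosimplicial piece in (iii) at the $S^1$-equivariant level, so that the coboundary is identified (up to equivariant equivalence) with the suspension map $T(\F_p)\r\widetilde{S^1_n}\wedge T(\F_p)$ induced by $S^0\r\widetilde{S^1_n}$. This requires a careful combinatorial analysis of the Koszul-type resolution arising from the nilpotent differences $\epsilon_{i,j}$, combined with the Hesselholt-Madsen identification of $THH$ of polynomial algebras with cyclic bar constructions; the key subtlety is that the cosimplicial object realizes the cofiber sequence only after equivariant Postnikov truncation (prior to which the non-commutation of $(-)_{\geq i}$ with cosimplicial realization is exactly what distinguishes the asserted homotopy fiber from the naive $(S^1_{n+}\wedge T(\F_p))_{\geq i}$).
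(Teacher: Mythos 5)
Your setup (the $\Z[1/p]_{\geq 0}$-weight decomposition of the cosimplicial object, perfectization via $x^{1/p^\infty}$, the difference variables, and the Hesselholt--Madsen identifications) matches the paper's, and your claims (i) and (ii) are fine in spirit. The genuine gap is claim (iii), which is not an intermediate step but essentially the theorem itself, and you leave it unproved (you yourself call it ``the main obstacle''). Two points make it nontrivial in exactly the way your plan does not address. First, the term $\widetilde{S^1_n}\wedge T(\F_p)=S^{\C[\xi_n]}\wedge T(\F_p)$ never literally occurs in the weight-$n$ piece of the cosimplicial object: after conormalization the cosimplicial degree-$k$ terms are wedges of $S^{V_{n_1}\oplus\dots\oplus V_{n_k}}\wedge T(\F_p)$ with $V_j=\C[\xi_1]\oplus\dots\oplus\C[\xi_j]$, so for $n\geq 2$ there are terms in many cosimplicial degrees and the representation spheres involved are not $S^{\C[\xi_n]}$; any ``two-step presentation'' is necessarily a derived comparison, not a visible splitting. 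Second, and decisively, the notion of ``up to an acyclic piece'' you need is very strong: the complementary piece must stay acyclic after applying $\Z/m$-fixed points and the levelwise equivariant Postnikov truncation $(-)_{\geq i}$ (for every $i$), since $(-)_{\geq i}$ does not commute with cosimplicial realization --- precisely the phenomenon the theorem is about. An equivalence detected only on realizations, or only nonequivariantly, proves nothing about $F^iT(\F_p[x])$; and whether the weight-$n$ piece admits such a strongly acyclic complement (e.g.\ one with an explicit cosimplicial contraction) is exactly what has to be checked against the arithmetic of the fixed points, where the groups $\Z/p^{\min(i+1,r)}$ with $p^i\,\|\,n$ must come out.

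By contrast, the paper does not attempt a structural two-term reduction: it computes the fixed-point spectral sequence of \rref{en1} directly, in each weight $d$, using \rref{ehess10} to locate the ``$\Z/p^i$-regions'', the $Ext$ of truncated polynomial algebras (with the transpotence classes and the differentials $d^1((v_b))=t_b$), and the additional $d_1$-differentials coming from the comodule structure $x\mapsto x\otimes 1+1\otimes y$ of \rref{exy}; the collapse at $E_2$ and the identification of the surviving $\Z/p^{j+1}$'s is where the content lies, and the description of $F^iT(\F_p[x])$ is read off from this computation. Your proposal would only become a proof if the ``careful combinatorial analysis'' you defer were carried out, and any honest execution of it would amount to redoing this fixed-point calculation (or an equivalent one), since that is the only place the equivariant Postnikov-truncation behavior in all weights, including those divisible by powers of $p$, is actually verified.
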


We shall prove Theorem \ref{t1} in detail, noting the intricacies of the spectral sequence involved. 
However, precisely by the same method, one can also prove a generalization:

\begin{theorem}\label{t2}
Let $p$ be a prime, and let $m\in \N$. 
For $R=\F_p[x_1,\dots,x_n]$, the spectral sequence obtained by applying $\Z/m$-fixed points
to \rref{en1} collapses to $E_2$ and converges to its abutment
$T(R)^{\Z/m}$. Additionally, $F^iT(R)$ is the wedge, over subsets $A\subseteq \{1,\dots,n\}$, and maps
$n_?:A\r\N$
of the $|A|$-fold homotopy fibers of the cubes $(S^{\bigoplus_{s\in A}\C[\xi_{n_s}]}\wedge T(\F_p))_{\geq i}$.
\end{theorem}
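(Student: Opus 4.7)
The plan is to extend the method of Theorem \ref{t1} by exploiting the multiplicativity of $THH$ under tensor products of $\F_p$-algebras, thereby reducing the $n$-variable case to an iterated application of the single-variable computation already carried out there.

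First, I would establish the equivariant splitting of $T(R)$. Since $\F_p[x_1,\dots,x_n] = \F_p[x_1] \otimes_{\F_p} \cdots \otimes_{\F_p} \F_p[x_n]$ and $THH$ carries tensor products of $\F_p$-algebras to smash products over $T(\F_p)$, the Hesselholt-Madsen formula \rref{e1} applied to each variable and distributed gives
$$T(R) \simeq \bigvee_{A \subseteq \{1,\dots,n\}} \bigvee_{n_? : A \to \N} T(\F_p) \wedge \bigwedge_{s \in A} S^1_{n_s +}.$$
Resolving each factor $S^1_{m+}$ via the cofiber sequence $S^1_{m+} \to S^0 \to S^{\C[\xi_m]}$, the summand indexed by $(A, n_?)$ becomes the $|A|$-fold iterated homotopy fiber of the cube $B \mapsto T(\F_p) \wedge S^{\bigoplus_{s \in B} \C[\xi_{n_s}]}$ over $B \subseteq A$, which is the geometric shape appearing in the theorem.

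Second, I would analyze the cosimplicial realization \rref{en1}. Since $R_{perf}$ decomposes as a tensor product of $(\F_p[x_i])_{perf}$ over $\F_p$, the cosimplicial object $[k] \mapsto T(R_{perf}^{\otimes_R (k+1)})$ factors as the levelwise smash product (over $T(\F_p)$) of the corresponding single-variable cosimplicial objects. Taking $\Z/m$-fixed points yields a multivariable spectral sequence which is a Kunneth-style product of the single-variable ones, each of which collapses at $E_2$ by Theorem \ref{t1}. By the derivation property of the differentials in a multiplicative spectral sequence, collapse on each factor forces collapse on the product, so the multivariable spectral sequence also collapses at $E_2$ and converges to $T(R)^{\Z/m}$. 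For the identification of filtered pieces, I use that the motivic filtration is defined via cosimplicial realization of the equivariant Postnikov filtration on the constituent spectra, which are equivariant Eilenberg-MacLane spectra by the quasiregular semiperfect hypothesis. Because the wedge decomposition is compatible with the cosimplicial filtration and equivariant Postnikov truncation commutes with the finite homotopy limit computing the iterated homotopy fiber of the cube, $F^i T(R)$ splits as the claimed wedge.

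The main obstacle will be verifying the Kunneth-style tensor product structure at $E_2$ and the consequent $E_2$-collapse: one must check that the $E_2$ page for each single variable is sufficiently flat to avoid Tor corrections when forming the product, and that the cross-variable differentials, which would have to respect the representation-sphere stratification, genuinely vanish. Both reduce to detailed structural information about the single-variable spectral sequence that is handled in the proof of Theorem \ref{t1}; granted that, the identification of $F^i T(R)$ with iterated homotopy fibers of Postnikov-truncated representation-sphere smash factors is a formal consequence of multiplicativity of the motivic filtration and compatibility of equivariant Postnikov truncation with finite homotopy limits of cubes of equivariant Eilenberg-MacLane spectra.
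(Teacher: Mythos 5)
Your first step (splitting $T(R)$ via multiplicativity of $THH$ and identifying each summand with an iterated homotopy fiber of a cube of representation spheres) is fine and parallels the paper's \rref{e1m}; likewise the observation that the Amitsur cosimplicial object factors variable-by-variable matches the paper's passage to an $\ell$-fold cosimplicial object \rref{e4m}. The gap is in the step you yourself flag as the "main obstacle" and then defer to Theorem \ref{t1}: the claim that the $\Z/m$-fixed-point spectral sequence is a K\"unneth-style product of the single-variable ones, so that collapse of the factors forces collapse of the product. Genuine $\Z/p^{r-1}$-fixed points are only lax monoidal; there is no K\"unneth formula (nor a K\"unneth spectral sequence free of essential corrections) computing $(X\wedge_{T(\F_p)}Y)^{\Z/p^{r-1}}$ from $X^{\Z/p^{r-1}}$ and $Y^{\Z/p^{r-1}}$. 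Concretely, by \rref{ehess10} the coefficients of $(S^V\wedge T(\F_p))^{\Z/p^{r-1}}$ are $\Z/p^i$ with $i$ depending on the dimension relative to the fixed-point dimensions of $V$, and in the multivariable answer (Lemma \ref{ltt21}) the torsion exponent is governed jointly by the $p$-valuation common to all the $n_s$ and by the representation $\bigoplus_{s}\C[\xi_{n_s}]$. This coupling across variables cannot be recovered by tensoring the single-variable answers over $\Z/p^r[\sigma]$: those modules are not flat, and even with Tor terms the naive product gives the wrong torsion pattern. So "flatness plus derivation property" is not available, and the structural information you appeal to is not in fact supplied by Theorem \ref{t1} as stated.

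What is actually needed, and what the paper does, is a strengthened induction hypothesis: one proves the single-variable collapse not just for $T(\F_p[x])$ but for $S^V\wedge T(\F_p[x])$ for an arbitrary finite-dimensional complex representation $V$ (the spectral sequence \rref{e441}), where the analysis of the $d_1$ repeats with $r$ replaced by $\min(r,i+1)$ according to the range $|V^{\Z/p^{r-i}}|\leq m<|V^{\Z/p^{r-i-1}}|$. With this $V$-twisted statement in hand, the induction on the number of variables proceeds by totalizing the last $\ell-1$ cosimplicial directions first, feeding the representation $V\oplus V_{n_1}\oplus\dots\oplus V_{n_{k_1}}$ into the inductive hypothesis, and comparing with Lemma \ref{ltt21}. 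Your argument could be repaired along these lines, but as written the reduction to Theorem \ref{t1} alone does not close: the untwisted single-variable theorem is strictly weaker than what the multivariable collapse requires.
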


\qed

All the objects in sight are well known to satisfy \'{e}tale descent. This can be used to give another proof 
of the flat descent equivalence \rref{en1}, and also a method for calculating the motivic filtration explicitly for smooth
$\F_p$-algebras. In effect, if an $\F_p$-algebra $S$ is \'{e}tale over an $\F_p$-algebra $R$, then
we have
$$S_{perf}=S\otimes_R R_{perf}.$$
Thus, we may form a bi-cosimplicial spectrum

\beg{eddouble}{THH(S_{perf}
\otimes_S\dots \otimes_S S_{perf}\otimes_R R_{perf}\otimes_R \dots\otimes_R R_{perf})}
(the $(0,0)$'th term is $THH(S_{perf}\otimes_R R_{perf}))=THH(S_{perf}
\otimes_S S_{perf})$. Assuming \rref{en1} is an equivariant
equivalence for $R$, realizing in the $R$-coordinate of \rref{eddouble} 
first, we get the right hand side of \rref{en1} for $S$. 
Realizing the $S$-coordinate of \rref{eddouble}
first, (and accepting that for a perfect ring, its $THH$ can be computed by
replacing it by the 
level-wise $THH$ of its cosimplicially equivalent semiperfect resolution), we get 
$$THH(S\otimes_R R_{perf}\otimes_R\dots \otimes_R R_{perf}),$$
which realizes to $THH(S)$ (see \cite{scholze}).

\vspace{3mm}
\noindent
{\bf Acknowledgement:} We are thankful to Peter Scholze for discussions on this subject.

\vspace{5mm}
\section{The spectral sequence}

By usual $THH$ considerations, the interesting mathematics happens on $\Z/p^{r-1}$-fixed points,
$r\in\N$. We will restrict
attention to this case.
In fact, one puts
$$TR^r(R)=T(R)^{\Z/p^{r-1}}.$$
Let us recall that Hesselholt and Madsen \cite{hmwitt}, Theorem 5.5, computed
$$TR^r(\F_p)=\Z/p^r[\sigma].$$
More precisely, the $\Z/p^{r-1}$-equivariant Postnikov tower of $T(\F_p)$ consists of even suspensions of the equivariant
Eilenberg-MacLane spectrum of the Mackey (in fact, Green) functor $\mathscr{W}_r(\F_p)$ whose
value on the orbit with isotropy $\Z/p^{i-1}$ is $W_i(\F_p)$, and Mackey functor restrictions are given
by the Frobenius, while transfers are given by Verschiebung, with isotropy groups acting trivially. An analogous
Mackey functor can, in fact, be defined with $\F_p$ replaced by any commutative ring $A$. In the present case, 
the Frobenius is just reduction modulo the appropriate power of $p$, while Verschiebung is multiplication by $p$.
(The Frobenius, which is the Mackey functor ``restriction", is not to be confused with the cyclotomic spectrum
restriction, which is the map from $\Z/p^{r-1}$-fixed points to the $\Z/p^{r-2}$-fixed points of the geometric
fixed points, which is equivalent to the $\Z/p^{r-2}$-fixed points of the original cyclotomic spectrum. In our present
setting, this map is a homomorphism of rings which sends $\sigma$ to $p \sigma$.)

\vspace{3mm}
The coefficients of $\Z/{p^{r-1}}$-fixed points
of \rref{e1} can be calculated either directly, or one can also use the calculation of Illusie \cite{illusie}, 
\textsection 1.2
of the deRham-Witt complex of affine spaces, using Hesselholt's formula for a regular $\F_p$-algebra $R$
\beg{ehess}{TR^r(R)_{n}=\bigoplus_{j+2k=n,j,k\in \N_0}W_r\Omega^jR
}
where $W_r\Omega^*R$ is the (truncated) deRham-Witt complex.
In fact, 
$$F^iTR^r(R)$$ 
is the sum of those summands of \rref{ehess} for which $j+k\geq i$.

\vspace{3mm}
In the present case, the deRham-Witt complex of $\F_p[x]$ is a sum, over $n\in \N_0$
(corresponding to the power of $x^n$) of the exterior algebra on one generator (in dimension $1$)
over $\Z/p^r$ tensored
with the submodule ${}_{p^{i+1}}W_r(\F_p)$ of $W_r(\F_p)$ consisting 
of elements which are annihilated by $p^{i+1}$, where $p^i$ is the highest
power of $p$ which divides $n$:
$$
\bigoplus_{n\in\N_0,\;p^i||n}\Lambda_{\Z/p^r}[u]\otimes ({}_{p^{i+1}}W_r(\F_p))
$$
(we put $i=r$ for $n=0$).
Thus, this module is $\Z/{p^{\min(i+1,r)}}[\sigma]$.

\vspace{3mm}
We have
$$\F_p[x]_{perf}=\F_p[x^{1/p^\infty}]:=\lim_\r \F_p[x^{1/p^n}].$$
Taking colimits, denoting by $\Z[1/p]^+_0$ the set of non-negative elements in $\Z[1/p]$,
one has, (somewhat non-canonically), 
\beg{e2}{T(\F_p[x^{1/p^\infty}])=\bigvee_{s\in \Z[1/p]^+_0} T(\F_p).
}
One can write
$$\underbrace{\F_p[x^{1/p^\infty}]\otimes_{\F_p[x]}\dots\otimes_{\F_p[x]} \F_p[x^{1/p^\infty}]}_{\text{$(n+1)$
times}}=
\F_p[x^{1/p^\infty}]\otimes ( \F_p[y^{1/p^\infty}]/(y))^{\otimes n}.
$$
Denoting the copies of $x$ on the left hand side by $x_i$, $i=1,\dots, n+1$, the copies $y_i$, $i=1,\dots, n$
of $y$ are given by
$$y_i^{1/p^k}=x_i^{1/p^k}-x_{i+1}^{1/p^k}.$$

\vspace{3mm}

\begin{lemma}\label{l1}
\beg{e3}{T(\F_p[y^{1/p^\infty}]/(y))=\bigvee_{s\in \Z[1/p]^+_0} S^{V_{\lfloor s\rfloor}}\wedge T(\F_p)
}
where 
$$V_n=\C[\xi_1]\oplus\dots\oplus \C[\xi_n].$$
(Thus, 
$S^{V_n}=\widetilde{S^1_1}\wedge\dots \wedge \widetilde{S^1_n}
$.)
\end{lemma}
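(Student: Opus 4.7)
The plan is to express $\F_p[y^{1/p^\infty}]/(y)$ as a filtered colimit of truncated polynomial algebras and invoke the Hesselholt--Madsen \cite{hmpoly} computation. Setting $z_k = y^{1/p^k}$ at stage $k$, the relation $z_k^{p^k} = y$ gives
$$\F_p[y^{1/p^\infty}]/(y) = \mathrm{colim}_k \F_p[z_k]/(z_k^{p^k}),$$
with transition $z_k \mapsto z_{k+1}^p$. Since $T$ preserves filtered colimits,
$$T(\F_p[y^{1/p^\infty}]/(y)) = \mathrm{colim}_k T(\F_p[z]/(z^{p^k}))$$
as $S^1$-equivariant spectra, so the task reduces to computing each $T(\F_p[z]/(z^{p^k}))$ and the transitions.

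The Hesselholt--Madsen analysis \cite{hmpoly} gives an $S^1$-equivariant wedge decomposition of $T(\F_p[z]/(z^e))$ indexed by the total $z$-degree $n \geq 0$ in the cyclic bar construction, with the degree-$n$ summand equivalent to $(S^1_+ \wedge_{C_n} S^{\lambda_{n,e}}) \wedge T(\F_p)$ for a $C_n$-representation $\lambda_{n,e}$. Their cyclic polytope argument identifies $\lambda_{n,e}$ with the restriction to $C_n$ of the $S^1$-representation $V_{\lfloor n/e \rfloor}$. Applying the standard isomorphism $S^1 \times_{C_n} X \cong (S^1/C_n) \times X$ for $S^1$-spaces $X$ (where the right-hand side inherits the diagonal $S^1$-action), the degree-$n$ summand rewrites as $S^1_{n+} \wedge S^{V_{\lfloor n/e \rfloor}} \wedge T(\F_p)$. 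Under the transition $z_k \mapsto z_{k+1}^p$, the degree-$n$ summand at level $k$ maps to the degree-$pn$ summand at level $k+1$, so reindexing by $s = n/p^k \in \Z[1/p]^+_0$ exhibits the representation $V_{\lfloor s \rfloor}$ as independent of $k$.

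The colimit for each fixed $s$ is a sequence $S^1_{n+} \wedge S^{V_{\lfloor s \rfloor}} \wedge T(\F_p)$ with $n = sp^k \to \infty$, where the transitions on the $S^1_{n+}$ factor are (non-equivariantly) the $p$-th power maps on $S^1$. Stably $S^1_{n+} \simeq S^0 \vee \Sigma S^0$; the colimit of the $\Sigma S^0$ factor is a $p$-inverted spectrum, which vanishes upon smashing with $T(\F_p)$ because $p$ acts as zero on the coefficient ring $\F_p[\sigma]$. What remains is $S^{V_{\lfloor s \rfloor}} \wedge T(\F_p)$, the claimed degree-$s$ summand. The $S^1$-equivariant version of this collapse follows from the analogous analysis on geometric fixed points, again exploiting the $p$-torsion structure of $T(\F_p)$.

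The primary technical hurdle is the identification of the $C_n$-representation $\lambda_{n, p^k}$ from Hesselholt--Madsen's cyclic polytope computation with the restriction of $V_{\lfloor s \rfloor}$, together with the verification that the transitions preserve these identifications. A secondary issue is the equivariant collapse of the orbit factors $S^1_{n+}$, which requires checking $p$-completeness on geometric fixed points of $T(\F_p)$; this follows from the known cyclotomic structure of $T(\F_p)$ but must be verified carefully at each subgroup.
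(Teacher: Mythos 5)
Your strategy is the same as the paper's: present $\F_p[y^{1/p^\infty}]/(y)$ as the colimit of the truncated polynomial algebras $\F_p[y^{1/p^k}]/(y)=\F_p[z_k]/(z_k^{p^k})$, invoke the Hesselholt--Madsen equivariant description of $T$ of a truncated polynomial algebra (\cite{hmpoly}, or \cite{hesselholt} Theorem 11, \cite{hmwitt} Proposition 9.1), and let the free circle factors die in the colimit because the transition maps become degree-$p$ maps after smashing with the $p$-complete ($p$-torsion) spectrum $T(\F_p)$. The untwisting $S^1_+\wedge_{C_n}S^{\lambda_d}\cong (S^1/C_n)_+\wedge S^{V_d}$, the bookkeeping $\lfloor n_k/p^k\rfloor=\lfloor s\rfloor$ being independent of the level, and your remark that the collapse must be verified on fixed points are all correct and are exactly what the paper's very terse proof asserts.

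There is, however, one genuine gap. The Hesselholt--Madsen theorem identifies the degree-$n$ piece $B^{cy}(\Pi_e;n)$ of the cyclic bar construction with an induced representation sphere $S^1_+\wedge_{C_n}S^{\lambda_d}$, $d=\lfloor n/e\rfloor$, only when $e\nmid n$; when $e\mid n$ it provides only a cofibration sequence involving an additional induced term over the subgroup $C_{n/e}$ (with $\lambda_{d-1}$), and the degree-$n$ piece is genuinely not of the form you assert --- for instance $B^{cy}(\Pi_p;p)$ has reduced $\F_p$-homology in degrees $1$ and $2$, which no $S^1_+\wedge_{C_n}S^{\lambda}$ has. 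In your colimit this is not a fringe case: at level $k$ one has $e=p^k$ and $n=sp^k$, so $e\mid n$ holds at every level precisely for the wedge summands with $s$ a positive integer. For those $s$ your argument tracks only the quotient term $S^1_+\wedge_{C_{sp^k}}S^{\lambda_s}$ and never addresses the extra term, so as written it does not establish that the degree-$s$ summand of the colimit is exactly $S^{V_s}\wedge T(\F_p)$; indeed at each finite level the summand smashed with $T(\F_p)$ has nonzero odd homotopy, which must cancel in the colimit. The repair uses the mechanism you already employ: the extra term has fixed index $C_s$ while the ambient degree grows by factors of $p$, and after smashing with the $p$-torsion spectrum $T(\F_p)$ its contribution dies in the colimit on all fixed points; but this has to be stated and checked, for example by chasing the cofibration sequences through the transition maps, or by comparing with the degree-$s$ part of $\pi_0 TR^r$ of $\F_p[y^{1/p^\infty}]/(y)$, where $y^s=0$ for integral $s\geq 1$.
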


\begin{proof}
Use the results of \cite{hesselholt} (Theorem 11) or \cite{hmwitt} (Proposition 9.1)
and pass to the colimit, keeping in mind that the colimit of $\Z/p^{k}_+$ can
be identified with $S^1_+$ due to the fact that we are smashing with the $p$-complete spectrum $T(\F_p)$.
\end{proof}

\vspace{3mm}
(Note: The formulas \rref{e2}, \rref{e3} are meant $S^1$-equivariantly.)

\vspace{3mm}
On fixed points, this can be described purely algebraically. Recall the result of Hesselholt 
\cite{hesselholt} (Theorem 11) or Hesselholt and Madsen \cite{hmwitt} (Proposition 9.1)
stating that for a complex $\Z/p^{r-1}$-representation $V$,
\beg{ehess10}{\begin{array}{c}
(S^V\wedge T(\F_p))^{\Z/p^{r-1}}_{2a}=\Z/p^{i} \;
\\
\text{for the unique $i$ such that
$|V^{\Z/p^{r-i}}|\leq a< |V^{\Z/p^{r-i-1}}|$}.\end{array}
}
The odd-degree groups are $0$. In \rref{ehess10}, for bookkeeping reasons, we set
$|V^{\Z/p^{-1}}|=\infty$, $|V^{\Z/p^{r}}|=-\infty$. Using \rref{ehess10} and Lemma \ref{l1}, one finds that 
\beg{e3alg}{\begin{array}{l}
(T(\F_p[y^{1/p^\infty}]/(y)))^{\Z/p^{r-1}}_{*}=\\[2ex]
\displaystyle
\bigoplus_{\ell\in \N_0}\sigma^\ell\cdot \Z/(p^r)[z_\ell^{1/p^\infty}]/(z_\ell^{\ell+1},
pz_\ell^{(\ell+1)/p},\dots, p^{r-1}z_\ell^{(\ell+1)/p^{r-1}})
\end{array}
}
where $\sigma$ has degree $2$
and the ring structure is explained by $pz_{\ell+1}=z_\ell$, $z_0=y$. (One uses the techniques of 
\cite{hesselholt,hmwitt}; note that the summand $s$ in the statement of Lemma \ref{l1} corresponds
to the term $y^{s/p^r}$; also note the reference \cite{posledni}, which explains how to name 
classes in $TR^{n+1}_{*-V}$ via differential forms.) Keep in mind, however, that
in the ring, $z_\ell$ only occurs after being multiplied by $\sigma^\ell$. Note that this is not in the
form \rref{ehess}, but then again, $ \F_p[y^{1/p^\infty}]/(y)$ is not a regular ring. Factoring
the ideal $(\sigma)$ out of \rref{e3alg}, we obtain the truncated Witt vectors:
\beg{e3witt}{W_r(\F_p[y^{1/p^\infty}]/(y))=
\Z/(p^r)[y^{1/p^\infty}]/(y,py^{1/p},\dots, p^{r-1}y^{1/p^{r-1}}).
}

\vspace{3mm}
Now back to the spectral sequence.
To verify that \rref{en1} induces
an isomorphism on $\Z/p^{r-1}$-fixed points for $R=\F_p[x]$, 
we must prove that the canonical map from \rref{en1} to the cobar construction on \rref{e2}
by the co-algebra \rref{e3} is an equivalence on $\Z/p^{r-1}$-fixed points. We obtain a spectral
sequence from applying $\Z/p^{r-1}$-fixed points to the right hand side of \rref{en1} and taking coefficients
$$
E_1^{*,*}=T
(\F_p[x^{1/p^\infty}]\otimes ( \F_p[y^{1/p^\infty}]/(y))^{\otimes *})^{\Z/p^{r-1}}_*
$$
converging to $T(?)^{\Z/p^{r-1}}_*$ applied to the cosimplicial realization.
We shall study this spectral sequence. (Note on grading: regardless of the position of the indices, because
the present situation mixes homological and cohomological grading, we grade everything homologically. A cohomological
degree can be converted to homological by taking its negative.)

Now the $E_1$-term can be simplified by writing \rref{e3} as
$$(\bigvee_{s\in\Z[1/p]\cap [0,1)} T(\F_p))\wedge_{T(\F_p)} (\bigvee_{n\in \N_0} S^{V_n}\wedge T(\F_p)).$$
Thereby, the $E_1$-term of the spectral sequence has a tensor factor of the dual Koszul complex of
$$\bigvee_{s\in\Z[1/p]\cap [0,1)} T(\F_p)^{\Z/p^{r-1}}_*,$$
which can be factored out since its homology is concentrated in degree $0$.
This leads to a simpler $E_1$-term which is the cosimplicial $\Z/{p^r}$-module 
(with cosimplicial coordinate $k\in \N_0$)
\beg{e4}{((\bigvee_{m\in \N_0} S^0)\wedge (\bigvee_{n\in\N_0} S^{V_n})^{\wedge k}\wedge 
T(\F_p))^{\Z/p^{r-1}}_*
}
Thus, we need to prove that the homology of the chain complex \rref{e4} coincides
with (an associated graded object of) the coefficients of the $\Z/p^{r-1}$-fixed points of \rref{e1},
which we recalled above.

\vspace{3mm}
Let us first note that for $r=1$, this works. \rref{e4} is just the cobar construction of $\Z/p[t]$
over a polynomial coalgebra on one generator of (homologically graded) bidegree $(-1,2)$ (with trivial coaction),
which has (homologically graded) cohomology 
\beg{e5}{T(\F_p[x])_*=\Z/p[t,\sigma]\oplus \Sigma \Z/p[t,\sigma]}
where $\Sigma$ is the
suspension (by the calculation of $Ext^*_{A[z]}(A,A)$ for a commutative ring $A$), which is the correct answer.

\vspace{3mm}
To make the calculation on $\Z/p^{r-1}$-fixed points, we 
note that the $E_1$-term \rref{e4} is graded by
$$d=m+n_1+\dots+n_k.$$
We will work in each degree $d$ separately. Assume $d=p^j d^\prime$ where $d^\prime$ is not divisible by $p$.
We will be working in each of the ``$\Z/p^i$-regions" (according to \rref{ehess10}) of the degree $d$ part of the
$E_1$-term \rref{e4} separately, showing that for the most part, they will cancel out one at a time.

\vspace{3mm}
Filtering \rref{e4} by terms with $m$ less or equal to a given number, we obtain a (purely algebraic) spectral sequence
whose $E_2$-term has the differential formed by the $\geq 1$'th co-faces of \rref{e4}. Now the method 
of computing this $E_2$-term 
is based on the computation of
\beg{eext}{Ext_{\Z/p^i[v]}(\Z/p^i,\Z/p^i),
}
with some exceptions caused by the varying $i$ in \rref{ehess10},
Concretely, the term \rref{ehess10} with $i=r$, in a given dimension, is computed literally by the formula
\rref{eext}, by using the non-equivariant result, and comparison with Hesselholt's computation
of homotopy fixed points \cite{hesselholt}, in which the coefficients embed. The 
answer is the exterior algebra over $\Z/p^r$ on one element $(v)$:
$$
\Lambda_{\Z/p^r}[v],
$$ 
which is represented by the generator
of 
$k=1$, $n=1$ in \rref{e4}. The $0$-coface component of the $d_1$ differential of \rref{e4} maps the 
$\Z/p^r$ in the same dimension for $m=d$, $k=0$ by $p^j$, giving the right answer in dimensions 
$\geq 1$.

\vspace{3mm}
To understand what happens for $i<r$, it is useful to filter $\Z/p^i[v]$ by an increasing filtration in which
$v,v^p,\dots, v^{p^{r-1}}$ are given filtration degree $1$. The associated graded algebra then will be
\beg{essgr}{\Z/p^i[v]/(v^p)\otimes \Z/p^i[v^p]/(v^{p^2})\otimes\dots \otimes\Z/p^i[v^{p^{r-1}}].
}
The $Ext(\Z/p^i,\Z/p^i)$ over \rref{essgr} is
\beg{essgr1}{\Lambda_{\Z/p^i}((v_0),\dots,(v_{r-1}))\otimes \Z/p^i[t_1,\dots,t_{r-1}]
}
where $v_b$ is dual to $v^{p^b}$
and the $t_b$ is the transpotence element occurring in the well known calculation
of the $Ext$ of truncated
polynomial algebras, which in our case is in cohomological dimension $2$ and $v$-degree $p^b$. The (purely algebraic)
spectral sequence calculating the $Ext(\Z/p^i,\Z/p^i)$ over $\Z/p^i[v]$ has the standard differentials
\beg{essgd}{d^1((v_b))=t_b, \;b=1,\dots, r-1
}
(obtained from the cobar construction model - see for example \cite{ravenel}, Lemma 3.2.4).
Now in our topological Hochschild homology situation the total topological dimension (which is always even)
determines $b$, and which of the differentials \rref{essgd} are disrupted. In fact, using \rref{ehess10} and
comparing the dimensions of the $\Z/p^i$-fixed points of the two sides of \rref{essgd} where $v_{m}$ is 
replaced by $V_{p^m}$ (the multiplication corresponds to direct sum)
shows that the only place where the differentials are disrupted for the reason of the
two sides being in different $j$-ranges of \rref{ehess10} is for $a=0$. This means that we are
dealing with the \rref{e3witt} part of \rref{e3alg}. 

\vspace{3mm}
The additional differentials in the spectral sequence obtained by taking $\Z/(p^{r-1})$-fixed points of 
\rref{e4}, (i.e. the part not explained by the coproduct on the middle term), are explained by the comodule
structure of the leftmost term. Going back to the notation \rref{e2}, \rref{e3witt}, this structure
is given by
\beg{exy}{x\mapsto x\otimes 1 +1\otimes y.
}
In the above analysis, we omitted the power of $x$ from the notation, but the power of $x$ present is
equal to the number $m$ in \rref{eext}. Depending on its $p$-valuation, we obtain additional
$d^1$-differentials, which wipe out all the remaining terms of the spectral sequence, except the answer. 

\vspace{3mm}
Concretely, 
let $d$ be divisible by $p^j$ but not $p^{j+1}$ for some $0\leq j<r$. (If $d$ is divisible by $p^r$, then only the 
$i<j$ part of the below differential pattern will occur.) Then the contribution
of \rref{exy} to the $d_1$ differential
pattern in topological dimension $0$ of the spectral sequence \rref{e4}, which is not explained
by the $Ext(\Z/p^i,\Z/p^i)$ over the algebra \rref{essgr}, consists of the degree $\leq d$ part of
\beg{eddiff1}{t_{i+1}^k\cdot (V_{p^i})\mapsto t_{i+1}^{k+1}, \; k\geq 0}
tensored with 
\beg{eddiff2}{\Lambda[(V_{p^\ell}),\ell>i]\otimes \Z/p^?[t_{\ell+1},\ell>i]
}
for $0\leq i<j$, and the degree $\leq d$ part of
\beg{eddiff3}{t_{j+1}^k\mapsto (V_{p^{j}})\cdot t_{j+1}^{k},\; k\geq 0}
tensored with 
\beg{eddiff4}{\Lambda[(V_{p^\ell}),\ell>j]\otimes \Z/p^?[t_{\ell+1},\ell>j].
}
This cancels all the remaining elements,
with the exception of  \rref{eddiff3} with $k=0$, where the source group is $\Z/p^r$, while the
target group is $\Z/p^{r-j-1}$, thus leaving a kernel of $\Z/p^{j+1}$, as claimed.

\vspace{5mm}
\section{Proof of Theorem \ref{t2}}\label{spt2}

First note that \rref{e1} generalizes to
\beg{e1m}{T(\F_p[x_1,\dots,x_\ell])=\bigvee_{A\subseteq\{1,\dots,\ell\}}
\bigvee_{n\in \N^A}S^{\bigoplus_{s\in A}\xi_{n(s)}}\wedge T(\F_p).
}
To prove Theorem \ref{t2}, we will proceed by induction on $\ell$. To this end, it is actually advantageous to
smash \rref{e1m} with $S^V$ for an arbitrary finite-dimensional complex representation $V$. Using the equivalence
between a diagonal totalization and a $k$-fold successive totalization of a $\ell$-fold cosimplicial abelian group,
the $E_1$-term of the spectral sequence analogous to \rref{e4} actually becomes an $\ell$-fold chain complex
of the form
\beg{e4m}{
(S^V\wedge (\bigvee_{m\in\N_0}S^0)^\ell
\wedge (\bigvee_{n\in \N_0}S^{V_{n}})^{k_1}\wedge\dots
\wedge (\bigvee_{n\in \N_0}S^{V_{n}})^{k_\ell}\wedge T(\F_p))_*^{\Z/p^{r-1}}
}
We will show that the $E_2$-term is actually isomorphic to the coefficients of
\beg{e4m1}{(S^V\wedge T(\F_p[x_1,\dots,x_\ell]))_*^{\Z/p^{r-1}}}
which, in turn, splits into summands over various choices of $A$, and the choices of mappings $n:A\r \N$. Let us
first state what the answer is:

\begin{lemma}\label{ltt21}
The summand of \rref{e4m1} corresponding to a map $n:A\r \N$ is isomorphic to a sum of $2j$-suspensions of
the exterior algebra over ${\Z/p^{i^\prime}}$ on $|A|$ generators where 
\beg{erange2}{|V^{\Z/p^{i+1-i^\prime}}|
\leq j< |V^{\Z/p^{i-i^\prime}}|}
where $i$ is the highest non-negative integer such that $p^i$ divides all the numbers $n(s)$, $s\in A$, 
and $r-1$. Here we apply the
same boundary conventions as before. In particular, the summand is $0$ for $j<|V^{\Z/p^{i}}|$.
\end{lemma}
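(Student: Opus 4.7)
The plan is to compute the summand of \rref{e4m1} corresponding to $(A,n)$ directly, by unwinding \rref{e1m}. After smashing with $S^V$, this summand equals
\[
(S^{W}\wedge T(\F_p))^{\Z/p^{r-1}}_*, \qquad W=V\oplus\bigoplus_{s\in A}\xi_{n(s)},
\]
so Hesselholt's formula \rref{ehess10} applies verbatim and yields a cyclic group $\Z/p^{i'}$ in each even dimension, determined by where that dimension falls in the isotropy stratification of $W$. The proof is then a bookkeeping argument that re-indexes this stratification to exhibit the stated exterior-algebra form.

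The first step is the isotropy calculation of $W$. Since $\Z/p^k\subseteq \Z/p^{r-1}$ acts trivially on $\xi_{n(s)}$ exactly when $p^k\mid n(s)$, and by definition of $i$ the subgroup $\Z/p^k$ does so for every $s\in A$ as long as $k\leq i$, one has
\[
|W^{\Z/p^k}|=|V^{\Z/p^k}|+2|A|\quad\text{for }0\leq k\leq i,
\]
while for $k>i$ the contribution from the $\xi_{n(s)}$-summands drops strictly. The second step is to feed these values into the condition $|W^{\Z/p^{r-i'}}|\leq a<|W^{\Z/p^{r-i'-1}}|$ of \rref{ehess10}. In the range $r-i'\leq i$, both bounds carry the full $2|A|$-shift, so substituting $a=j+|A|$ (and then reindexing $i'$ against $i$ rather than against $r$) produces precisely the constraint \rref{erange2} of the lemma, the bottom vanishing threshold $j<|V^{\Z/p^i}|$ corresponding to the $\Z/p^{i'}$ with $i'=0$ (which kills the exterior algebra).

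The third step is to recognize the exterior algebra. For each $s\in A$ there is a cofibration $S^0\to S^{\xi_{n(s)}}$; smashing them over $A$ produces an $|A|$-dimensional cube with corners $\{S^{\bigoplus_{s\in B}\xi_{n(s)}}\}_{B\subseteq A}$ whose total cofiber is $S^{\bigoplus_{s\in A}\xi_{n(s)}}$. Smashing the cube with $T(\F_p)$ and taking $\Z/p^{r-1}$-fixed points turns the contributions of the $2^{|A|}$ corners into the basis of a Koszul (exterior) complex on $|A|$ generators, whose coefficients are governed by the cyclic group $\Z/p^{i'}$ isolated in step two. The polynomial $\Z/p^r[\sigma]$-module structure on $T(\F_p)^{\Z/p^{r-1}}$ then translates the $\sigma^j$-multiplications into the $2j$-suspensions appearing in the statement, with $j$ running through the interval \rref{erange2} for each value of $i'$.

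The main obstacle will be the matching of these two descriptions: the raw Hesselholt calculation produces a single cyclic group in each even dimension, whereas the lemma presents the answer as a direct sum of shifted exterior algebras, so one must check that the reshuffling is a genuine isomorphism and not merely an equality of some cruder invariant (such as order in each degree). This reduces to showing that the cube interpretation correctly accounts for every cyclic summand that Hesselholt's formula produces across the transition regions $k\in\{i+1,\dots,r-1\}$ where the fixed-point dimensions of $W$ break down from $|V^{\Z/p^k}|+2|A|$ to $|V^{\Z/p^k}|$; once this is checked in the maximally symmetric case $v_p(n(s))=i$ for all $s$, the general case follows because any further drops in $|W^{\Z/p^k}|$ for $k>i$ occur inside the $V$-stratum that is already fully resolved by the exterior-algebra bookkeeping.
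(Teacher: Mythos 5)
There is a genuine gap, and it sits exactly at your first displayed identification. The wedge summand of \rref{e4m1} attached to $(A,n)$ is \emph{not} $(S^{W}\wedge T(\F_p))^{\Z/p^{r-1}}_*$ with $W=V\oplus\bigoplus_{s\in A}\xi_{n(s)}$: that is only the terminal vertex of the cube. The actual summand is $S^V\wedge\bigl(\bigwedge_{s\in A}S^1_{n(s)+}\bigr)\wedge T(\F_p)$, i.e.\ the $|A|$-fold total homotopy fiber of the cube $B\mapsto S^{V\oplus\bigoplus_{s\in B}\xi_{n(s)}}\wedge T(\F_p)$ (compare \rref{e1}, the cofibration $S^1_{n+}\to S^0\to\widetilde{S^1_n}$, and the statement of Theorem \ref{t2}). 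These two objects cannot be confused even at the level of graded groups: by \rref{ehess10} the terminal vertex has homotopy concentrated in even degrees, a single cyclic group in each, whereas the lemma's answer (a sum of $2j$-suspended exterior algebras on $|A|$ degree-one generators, as in \rref{e5} and the de Rham--Witt description) has nonzero odd-degree classes as soon as $A\neq\emptyset$. So the ``matching'' you defer to as the main obstacle is not a reshuffling issue --- the two descriptions you propose to reconcile already disagree in odd degrees, and your step two computes the wrong object.

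Relatedly, the essential content of the lemma is invisible from $W$ alone: \rref{ehess10} applied to $W$ gives torsion governed by $r$ (e.g.\ $\Z/p^r$ for all $a\geq|W|$), whereas \rref{erange2} replaces $r$ by $i+1$ with $i=\min_s v_p(n(s))$ truncated at $r-1$; your phrase ``reindexing $i'$ against $i$ rather than against $r$'' is precisely the unproven step. That truncation comes from the maps in the cube, which on fixed-point homotopy are multiplications by specific powers of $p$ depending on the ranges in \rref{ehess10}, together with the resulting extension problems --- this is the analogue of the $d_1$/Verschiebung analysis carried out for $\ell=1$ in Section 2, and it is exactly what the paper avoids redoing by citing Hesselholt \cite{hesselholt}. (A smaller point: your shift $|W^{\Z/p^k}|=|V^{\Z/p^k}|+2|A|$ uses real dimensions, while the inequalities in \rref{ehess10} are stated in complex dimensions, so even the bookkeeping in step two needs correcting.) Your step three does point at the right structure --- the cube with corners $S^{\bigoplus_{s\in B}\xi_{n(s)}}$ and a Koszul-type complex --- and a correct direct proof can be built on it, but only after identifying the cube's maps and their $p$-divisibility and checking that the resulting homology is free over $\Z/p^{i'}$ in the stated range; as written, the proposal asserts rather than proves this.
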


\begin{proof}
The proof proceeds along similar lines as in the $\ell=1$ case, but we skip it, since it is a part of 
Hesselholt's results \cite{hesselholt}.
\end{proof}

Now as already mentioned, \rref{e4m} can be considered as an $\ell$-fold chain complex, which splits into
summands corresponding to maps $n:A\r\N$, which we can
study one at a time. We will proceed by induction on $\ell$.  Since the complexes for proper subsets are isomorphic to summands of \rref{e4m} for lower $\ell$, without loss of generality, we can assume $A=\{1,
\dots,\ell\}$. Now we will proceed by calculating, for each $n$, first the homology of the complex 
obtained by totalizing the last $\ell-1$ differentials. This will lead to another purely algebraic
spectral sequence, but again, we will see that it collapses to $E_2$. In fact, the $E_1$-term has already
been calculated by the induction hypothesis, where $V$ is replaced by 
$$V\oplus V_{n_1}\oplus\dots\oplus V_{n_{k_1}}$$
where $n_1,\dots, n_{k_1}$ are the $n$'s occurring in the wedge summand in the $k_1$-smash power in 
\rref{e4m}.

On the other hand, by Lemma
\ref{ltt21},
the answer is (the appropriate summand of) the exterior algebra over $\Z_p$ on $\ell-1 $ generators,
tensored with the $E_1$-term of the spectral sequence \rref{e4m} for $\ell=1$, with $r$ replaced by the minimum of
$r$ and $i+1$, where $i$ is the highest non-negative integer such that $p^i$ divides all of the numbers
$n(2),\dots,n(\ell)$. 

\vspace{3mm}
Thus, what remains to do is to figure out the $d_1$ of the spectral sequence \rref{e4m} with $\ell=1$, or,
equivalently, the spectral sequence \rref{e4} where the argument is smashed with $S^V$, or, explicitly,
\beg{e441}{(S^V\wedge(\bigvee_{m\in \N_0} S^0)\wedge (\bigvee_{n\in\N_0} S^{V_n})^{\wedge k}\wedge 
T(\F_p))^{\Z/p^{r-1}}_*
}
This is a variation of our analysis of the $d_1$ of the spectral 
sequence \rref{e4} in the last section. Essentially, the discussion of
the cases is exactly the same, except that if 
$$|V^{\Z/p^{r-i}}|\leq m< |V^{\Z/p^{r-i-1}}|$$
(with the same conventions as under \rref{ehess10}), $r$ is replaced by $i+1$ and $i$ is replaced by 
another index.

\vspace{10mm}

Department of Mathematics, Wayne State University, 1150 Faculty/

\noindent
Administration Bldg., Detroit, MI 48202

\vspace{3mm}

Department of Mathematics, University of Michigan, 530 Church Street, Ann Arbor, MI 48109-1043

\vspace{3mm}

Mathematical Institute of Charles University, Faculty of Mathematics and Physics, Sokolovsk\'{a} 83, Praha 8, 
Karl\'{\i}n, 18000 Czech Republic

\vspace{3mm}

Corespondence to be sent to: kriz.igor@gmail.com

\end{document}